\author{Masood Aryapoor}
\address[Masood Aryapoor]{Division of Mathematics and Physics, M\"alar\-dalen  University, SE-631 05 Eskilstuna, Sweden}
\email{masood.aryapoor@mdu.se}
\author{Per B\"ack}
\address[Per B\"ack]{Division of Mathematics and Physics, M\"alar\-dalen  University, SE-721  23  V\"aster\r{a}s, Sweden}
\email[corresponding author]{per.back@mdu.se}
\date\today
\newtheorem{corollary}{Corollary}
\newtheorem{lemma}{Lemma}
\newtheorem{proposition}{Proposition}
\newtheorem{theorem}{Theorem}
\newtheorem{question}{Question}
\theoremstyle{definition}
\newtheorem{definition}{Definition}
\newtheorem{example}{Example}
\theoremstyle{remark}
\newtheorem{remark}{Remark}
\DeclareMathOperator{\Cay}{Cay}
\DeclareMathOperator{\id}{id}
\DeclareMathOperator{\lc}{lc}
\newcommand{\fl}{\text{\upshape f\hspace{.56 pt}l}}
\newcommand{\degladd}{\text{\upshape deg\(_l\)-add}}
\newcommand{\degradd}{\text{\upshape deg\(_l\)-add}}
\begin{document}
\subjclass[2020]{16S36, 17A99, 17D99}
\keywords{Euclidean division algorithm, flipped nonassociative Ore extensions, generalized nonassociative Ore extensions (GNOE), Hilbert's Basis Theorem, Noetherian rings, nonassociative Ore extensions, Ore extensions}

\begin{abstract}
We introduce a broader class of nonassociative Ore extensions that  unifies and generalizes several earlier constructions. We prove generalizations of Hilbert’s Basis Theorem for this class, showing that they arise immediately from the existence of Euclidean division algorithms. These results extend Hilbert’s Basis Theorem to new families of nonassociative, noncommutative polynomial rings and establish a novel and direct connection between Euclidean division algorithms and the left and right Noetherianity of such rings.
\end{abstract}

\title[Hilbert's Basis Theorem for GNOEs]{Hilbert's Basis Theorem for generalized nonassociative Ore extensions}

\maketitle

\section{Introduction}
Hilbert’s Basis Theorem, first established by Hilbert \cite[Theorem I and II]{Hil90}, is a foundational result in commutative algebra. It states that if a ring \(R\) is Noetherian, then the polynomial ring \(R[X]\) is also Noetherian. The principal noncommutative generalization of this result arises through Ore extensions, introduced by Ore \cite{Ore33}, for which a first version of Hilbert’s Basis Theorem was established by Noether and Schmeidler \cite[Satz III]{NS20} (see e.g. \cite[Theorem 2.6]{GW04} for the general version).

In more recent work, several nonassociative generalizations of Ore extensions have been developed, extending these ideas beyond the associative framework (see e.g. \cite{AB25, BLOR26, BRS18, NOR18, NOR19}). In many cases, corresponding versions of Hilbert’s Basis Theorem have also been proven (see e.g. \cite{BLOR26, BR23, BR24}).

In this article, we introduce a broader class of nonassociative Ore extensions, which unifies and strictly generalizes several previously studied constructions. Our main results (\autoref{thm:hilbert-for-left-GNOE}, \ref{thm:hilbert-for-right-GNOE}, and \ref{thm:hilbert-for-GNOE}) are generalizations of Hilbert’s Basis Theorem for this class, showing that they follow directly from the existence of Euclidean division algorithms. These results extend Hilbert’s Basis Theorem to new families of nonassociative, noncommutative polynomial rings and establish a novel and direct link between Euclidean division algorithms and the left and right Noetherianity of such rings.

\section{Preliminaries}
\subsection{Nonassociative rings and modules}
We denote by \( \mathbb{N} \) the set of natural numbers, including zero. All rings and algebras are assumed to be unital, and any nonzero endomorphism is assumed to respect the identity element. A \emph{nonassociative ring} is a ring that is not necessarily associative. Suppose that \( R \) is such a ring. The \emph{left}, \emph{middle}, and \emph{right nuclei} of \( R \) are defined as follows: \(
N_l(R) \colonequals \{ r \in R\colon (r,s,t) = 0 \text{ for all } s,t \in R \}\), \(N_m(R) \colonequals \{ s \in R\colon (r,s,t) = 0 \text{ for all } r,t \in R \}\), and \(N_r(R) \colonequals \{ t \in R\colon (r,s,t) = 0 \text{ for all } r,s \in R \}\), where \( (r,s,t) \colonequals (rs)t - r(st) \). An element \(r\in R\) is called \emph{power-associative} if the subring of \(R\) generated by \(r\) is associative, or equivalently, if \(r^{n+1}\colonequals r^nr\) and \(r^mr^n=r^{m+n}\) for all \(m,n\in\mathbb{N}_{>0}\).

Let \(R\) be a nonassociative ring. A \emph{left \( R \)-module} is an abelian group \( M \) equipped with a biadditive map \( R \times M \to M \), \( (r,m) \mapsto rm \), called the \emph{module multiplication}. A subset \( B \) of \( M \) is called a \emph{basis} if every element \( m \in M \) can be uniquely expressed as a finite sum \( \sum_{b \in B} r_b b \), where \( r_b \in R \) and the sum has finite support. A left \( R \)-module with a basis is said to be \emph{free}. A \emph{submodule} of \( M \) is a subgroup of \( M \) that is closed under the module multiplication. Let \( N \) be a nonempty subset of \( M \). The intersection of all submodules of \( M \) that contain \( N \) is itself a submodule of \( M \), called the \emph{submodule of \( M \) generated by \( N \)}, denoted by \( {_R}\langle N \rangle \). In particular, \( {_R}\langle N \rangle \) is the smallest submodule of \( M \) that contains \( N \). If \( M = {_R}\langle N \rangle \) for some finite set \( N \), then \( M \) is said to be \emph{finitely generated by \( N \)}. The analogous definitions for right \( R \)-modules follow in a similar fashion.

A left or right \( R \)-module \( M \) is called \emph{Noetherian} if every submodule of \( M \) is finitely generated. Equivalently, this means that every ascending chain of submodules of \( M \) stabilizes, or that every nonempty family of submodules of \( M \) has a maximal element. \(R\) is called \emph{left Noetherian} if it is Noetherian as a left \(R\)-module, \emph{right Noetherian} if it is Noetherian as a right \(R\)-module, and \emph{Noetherian} if it is both left and right Noetherian.

\subsection{Ore extensions}
We begin by recalling the definition of \emph{Ore extensions}.

\begin{definition}[Ore extension]\label{def:Ore} 
Let \(R\) be a ring. A pair \((S,x)\) is called a \emph{(left) Ore extension} of \(R\) if the following axioms hold:  
 \begin{enumerate}[label=(O\arabic*)]
 \item \(S\) is a ring extension of \(R\) and \(x\in S\);\label{it:ore1}
 \item \(S\) is associative;\label{it:ore2}
\item \(S\) is a free left \(R\)-module with basis \(\{1,x,x^2,\dots\}\);\label{it:ore3}
\item \(xR\subseteq R+Rx\).\label{it:ore4}
 \end{enumerate}
 \end{definition}

 Let \(R\) be a nonassociative ring with an endomorphism \(\sigma\). An additive map \(\delta\) on \(R\) is called a \emph{\(\sigma\)-derivation} if for all \(r,s\in R\), 
 \[\sigma(rs)=\sigma(r)\delta(s)+\delta(r)s.\]

Given an associative ring \(R\) with an endomorphism \(\sigma\) and a \(\sigma\)-derivation \(\delta\), the \emph{ordinary generalized polynomial ring} \(R[X;\sigma,\delta]\) is the additive group \(R[X]\) of left polynomials in \(X\) with coefficients in \(R\), equipped with multiplication defined by the biadditive extension of the relations 
\begin{equation}
(rX^m)(sX^n)=\sum_{i\in\mathbb{N}}(r\pi_i^m(s))X^{i+n}.\label{eq:ore-mult}
\end{equation}
Here, the functions \(\pi_i^m\colon R\to R\) are defined as the sum of all \(\binom{m}{i}\) compositions of \(i\) copies of \(\sigma\) and \(m-i\) copies of \(\delta\). For example, \(\pi_2^3=\sigma\circ\sigma\circ\delta + \sigma\circ\delta\circ\sigma + \delta\circ\sigma\circ\sigma\). We define \(\pi_0^0\) as \(\id_R\) and set \(\pi_i^m=0\) whenever \(i>m\) or \(i<0\).

Any ordinary generalized polynomial ring \(R[X;\sigma,\delta]\) is an Ore extension \((S,X)\) of \(R\), and any Ore extension \((S,x)\) of \(R\) is isomorphic to an ordinary generalized polynomial ring \(R[X;\sigma,\delta]\) (for proofs, see e.g. \cite[Section 2.1]{Cohn95} or \cite[Proposition 3.2 and 3.3]{NOR18}). If \(\sigma=\id_R\) and \(\delta=0_R\), then \(R[X;\sigma,\delta]\) is the ordinary polynomial ring \(R[X]\).

\subsection{Nonassociative Ore extensions} In \cite{NOR18}, the authors noted that for any two additive maps \(\sigma\) and \(\delta\) on \(R\) that satisfy \(\sigma(1) = 1\) and \(\delta(1) = 0\), the product \eqref{eq:ore-mult} equips the additive group \(R[X;\sigma,\delta]\) of left polynomials in \(X\) with coefficients in a nonassociative ring \(R\), with a nonassociative ring structure. In order
to define a nonassociative version of Ore extensions, they investigated how to adapt the axioms \ref{it:ore1}, \ref{it:ore2}, \ref{it:ore3}, and \ref{it:ore4}, so that these rings and the above defined \emph{generalized polynomial rings} would correspond to one another. They then suggested the following definition:

\begin{definition}[Nonassociative Ore extension]\label{def:n-Ore} 
Let \(R\) be a nonassociative ring. A pair \((S,x)\) is called a \emph{(left) nonassociative Ore extension} of \(R\) if the following axioms hold:  
 \begin{enumerate}[label=(NO\arabic*)]
 \item \(S\) is a ring extension of \(R\) and \(x\in S\);\label{it:n-ore1}
 \item \(x\in N_m(S)\cap N_r(S)\);\label{it:n-ore2}
\item \(S\) is a free left \(R\)-module with basis \(\{1,x,x^2,\dots\}\);\label{it:n-ore3}
\item \(xR\subseteq R+Rx\).\label{it:n-ore4}
 \end{enumerate}
 \end{definition}

 \begin{remark}Axiom \ref{it:n-ore2} ensures that the element \(x\) is power-associative, so that there is no ambiguity in writing \(x^n\) for the \(n\)-fold product of \(x\) with itself.
 \end{remark}

Given a nonassociative ring \(R\) with additive maps \(\sigma\) and  \(\delta\) that satisfy \(\sigma(1) = 1\) and \(\delta(1) = 0\), the \emph{generalized polynomial ring} \(R[X;\sigma,\delta]\) is the additive group \(R[X]\) of left polynomials in \(X\) with coefficients in \(R\), equipped with the multiplication defined by \eqref{eq:ore-mult}. By construction, any generalized polynomial ring \(R[X;\sigma,\delta]\) is a nonassociative Ore extension \((S,X)\) of \(R\) (see \cite[Proposition 3.2]{NOR18}), and any nonassociative Ore extension \((S,x)\) of \(R\) is isomorphic to a generalized polynomial ring \(R[X;\sigma,\delta]\) (see \cite[Proposition 3.3]{NOR18}). 

\subsection{Flipped nonassociative Ore extensions}\label{sec:flipped} Let \(A\) be a nonassociative algebra over an associative, commutative ring \(K\). An \emph{involution of \(A\)} is a \(K\)-linear map  \(\alpha\colon A\to A\) that satisfies \(\alpha^2(a)=a\) and \(\alpha(ab)=\alpha(b)\alpha(a)\) for all \(a,b\in A\). A nonassociative algebra equipped with an involution \(*\) is called a \emph{nonassociative \(*\)-algebra}. 

In \cite[Theorem 1]{AB25}, the authors showed that any \emph{Cayley double} of a nonassociative \(*\)-algebra \(A\) can be described as a quotient of a nonassociative Ore extension of \(A\) equipped with a ``flipped'' multiplication \(\tau_n\colon A\times A\to A\) for \(n\in\mathbb{N}\), defined by 
\[\tau_n(a,b)\colonequals \begin{cases}ab&\text{if } n \text{ is even,}\\ ba&\text{if } n\text{ is odd.}\end{cases}\]
To formalize this idea, they introduced the notion of \emph{flipped nonassociative Ore extensions}:
\begin{definition}[Flipped nonassociative Ore extension]\label{def:flipped-n-Ore} 
Let \(R\) be a nonassociative ring. A pair \((S,x)\) is called a \emph{(left) flipped nonassociative Ore extension} of \(R\) if the following axioms hold:  
 \begin{enumerate}[label=(F\arabic*)]
 \item \(S\) is a ring extension of \(R\) and \(x\in S\);\label{it:flipped-n-ore1}
\item \(x\) is power-associative;\label{it:flipped-n-ore2}
\item \(S\) is a free left \(R\)-module with basis \(\{1,x,x^2,\dots\}\);\label{it:flipped-n-ore3}
 \item The following identities hold for all \(r,s\in R\) and  \(m,n\in\mathbb{N}\):
 
 \noindent\((rx^{m+1})(sx^n)=((rx^m)(\sigma(s)x^n))x+(rx^m)(\delta(s)x^n),\quad r(sx^n)=\tau_n(r,s)x^n\),
 where \(\sigma\) and \(\delta\) are additive maps such that \(xr=\sigma(r)x+\delta(r)\) for all \(r\in R\).\label{it:flipped-n-ore4}
 \end{enumerate}
 \end{definition}

\begin{remark}In \cite[Definition 5]{AB25}, Axiom \ref{it:flipped-n-ore2} was inadvertently omitted; it is, however, required for the validity of the results presented there.
\end{remark}
Given a nonassociative ring \(R\) with additive maps \(\sigma\) and  \(\delta\) that satisfy \(\sigma(1) = 1\) and \(\delta(1) = 0\), the \emph{flipped generalized polynomial ring} \(R[X;\sigma,\delta]^\fl\) is the additive group \(R[X]\) of left polynomials in \(X\) with coefficients in \(R\), equipped with multiplication defined by the biadditive extension of the relations 
\[(rX^m)(sX^n)=\sum_{i\in\mathbb{N}}\tau_n(r,\pi_i^m(s))X^{i+n}.\]
Any flipped generalized polynomial ring \(R[X;\sigma,\delta]^\fl\) is a flipped nonassociative Ore extension \((S,X)\) of \(R\) (see \cite[Proposition 1]{AB25}), and any flipped nonassociative Ore extension \((S,x)\) of \(R\) is isomorphic to a flipped generalized polynomial ring \(R[X;\sigma,\delta]^\fl\) (see \cite[Proposition 2]{AB25}).

\section{Generalized nonassociative Ore extensions}
In this section, we introduce the notion of \emph{generalized nonassociative Ore extensions} and establish some of their basic properties.
\begin{definition}[Left generalized nonassociative Ore extension]\label{def:left-GNOE} 
Let \(R\) be a nonassociative ring. A pair \((S,x)\) is called a \emph{left generalized nonassociative Ore extension (left GNOE)} of \(R\) if the following axioms hold:  
 \begin{enumerate}[label=(LG\arabic*)]
 \item \(S\) is a ring extension of \(R\) and \(x\in S\); \label{it:lg_ore1}
 \item \(x\) is power-associative;\label{it:lg_ore2}
\item \(S\) is a free left \(R\)-module with basis \(\{1,x,x^2,\dots\}\);\label{it:lg_ore3}
\item \((Rx^m)(Rx^n)\subseteq R+Rx+\cdots+Rx^{m+n}\) for all \(m,n\in\mathbb{N}\).\label{it:lg_ore4}
 \end{enumerate}
 \end{definition}
 
Let \((S,x)\) be a left GNOE of a nonassociative ring \(R\). It follows from \ref{it:lg_ore3} that every nonzero element \(p\) in \(S\) can be uniquely written as 
\[
    p = r_0+r_1x+\cdots+r_nx^n,
\]
where \(r_0,\dots,r_n\in R\) with \(r_n\neq 0\). Using this unique representation of \(p\), we define the \emph{left degree} of \(p\) as \(\deg_l p \colonequals n\) and the \emph{left leading coefficient} of \(p\) as \(\lc_l(p) \colonequals r_n\). We use the conventions \(\deg_l 0 \colonequals -\infty\) and \(\lc_l(0) \colonequals 0\). 

The next proposition is immediate and provides a characterization of left GNOEs.

\begin{proposition}\label{prop:degree-condition}
Let $R$ be a nonassocaitive ring and \((S,x)\) be a pair that satisfies \ref{it:lg_ore1}, \ref{it:lg_ore2}, and \ref{it:lg_ore3}. Then \ref{it:lg_ore4}  holds if and only if \(\deg_l pq \leq \deg_l p +\deg_l q\) for all \(p,q\in S\). 
\end{proposition}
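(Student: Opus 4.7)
The plan is to unpack both directions using the unique representation of elements of \(S\) afforded by axiom \ref{it:lg_ore3}. The right-hand side \(R+Rx+\cdots+Rx^{k}\) is precisely \(\{p\in S\colon \deg_l p\le k\}\), and likewise \(Rx^m=\{rx^m\colon r\in R\}\) consists of elements whose left degree is at most \(m\). Once that translation is made, both implications follow almost tautologically, and the main ``obstacle'' is simply to handle the degenerate case \(p=0\) or \(q=0\) correctly via the conventions \(\deg_l 0=-\infty\) and \(\lc_l(0)=0\).

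For the forward direction, I would assume \ref{it:lg_ore4} and take arbitrary \(p,q\in S\). If either is zero the inequality is immediate. Otherwise, write \(p=\sum_{i=0}^{m}r_ix^i\) and \(q=\sum_{j=0}^{n}s_jx^j\) with \(m=\deg_l p\) and \(n=\deg_l q\). By biadditivity,
\[
pq=\sum_{i=0}^{m}\sum_{j=0}^{n}(r_ix^i)(s_jx^j),
\]
and each term \((r_ix^i)(s_jx^j)\in (Rx^i)(Rx^j)\subseteq R+Rx+\cdots+Rx^{i+j}\subseteq R+Rx+\cdots+Rx^{m+n}\) by \ref{it:lg_ore4}. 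Hence \(pq\in R+Rx+\cdots+Rx^{m+n}\), which is exactly the statement that \(\deg_l pq\le m+n=\deg_l p+\deg_l q\).

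For the converse, assume the degree inequality holds for all \(p,q\in S\). Fix \(m,n\in\mathbb{N}\) and arbitrary \(r,s\in R\). Since \(\deg_l(rx^m)\le m\) and \(\deg_l(sx^n)\le n\), the hypothesis gives \(\deg_l\bigl((rx^m)(sx^n)\bigr)\le m+n\), which by the unique expansion in the basis \(\{1,x,x^2,\dots\}\) means \((rx^m)(sx^n)\in R+Rx+\cdots+Rx^{m+n}\). Since every element of the additive subgroup generated by such pure products lies in the same subgroup, this yields \((Rx^m)(Rx^n)\subseteq R+Rx+\cdots+Rx^{m+n}\), establishing \ref{it:lg_ore4}. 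Note that power-associativity of \(x\) from \ref{it:lg_ore2} is used only implicitly, to ensure that \(x^m\) and \(x^n\) are unambiguous basis elements.
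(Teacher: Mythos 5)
Your proof is correct and is precisely the ``immediate'' argument the paper has in mind (the paper states the proposition without proof): the identification of \(R+Rx+\cdots+Rx^k\) with the set of elements of left degree at most \(k\), together with biadditivity of the multiplication, gives both directions. Nothing further is needed.
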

It follows from the definition that nonassociative Ore extensions form a subclass of the class of left GNOEs. Moreover, the following result holds: 
\begin{proposition}\label{prop:leftGNOE-NOE}
Let $(S,x)$ be a left GNOE of a nonassociative ring $R$. Then the following assertions hold:
\begin{enumerate}[label=\upshape(\roman*)]
\item \((S,x)\) is an Ore extension of \(R\) if and only if \(S\) is associative;
\item \((S,x)\) is a nonassociative Ore extension of \(R\) if and only if\\ \(x\in N_m(S)\cap N_r(S)\).
\end{enumerate}
\end{proposition}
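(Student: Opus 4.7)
The plan is to observe that both biconditionals reduce to a straightforward matching of axioms, with a single short calculation as the only nontrivial content. The ``only if'' direction of (i) is immediate, since the associativity of $S$ is precisely axiom (O2); likewise the ``only if'' direction of (ii) is immediate, since the condition $x \in N_m(S) \cap N_r(S)$ is precisely axiom (NO2). For the ``if'' directions, I would start from the hypothesis that $(S,x)$ is a left GNOE together with the extra assumption (either $S$ associative, or $x \in N_m(S) \cap N_r(S)$). The axioms (O1)/(NO1) and (O3)/(NO3) match (LG1) and (LG3) verbatim, and the extra assumption supplies (O2)/(NO2). The only axiom left to establish is (O4)/(NO4), namely $xR \subseteq R + Rx$.

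To derive this from (LG4), I would use the trivial identity $x = 1 \cdot x$, which shows that $x \in Rx$. For any $s \in R$ it then follows that $xs = (1 \cdot x) \cdot s \in (Rx) \cdot R$, and applying (LG4) in the smallest nontrivial case $m = 1$, $n = 0$ gives $(Rx) \cdot R \subseteq R + Rx$. Hence $xs \in R + Rx$, proving $xR \subseteq R + Rx$, which is exactly (O4)/(NO4).

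I do not expect any genuine obstacle. Once the embedding $x = 1 \cdot x \in Rx$ is noted, (LG4) immediately produces the missing axiom, and the remainder of the argument is bookkeeping. The only point requiring mild care is to avoid tacitly invoking associativity when manipulating products in $S$, but the derivation above uses only the unit axiom of $S$ together with the containment (LG4), so it is valid in full nonassociative generality.
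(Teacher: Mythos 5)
Your proof is correct, and it is exactly the direct verification the authors intend: the paper states this proposition without proof as an immediate consequence of the definitions, and the only step with any content is the one you isolate, namely deducing \ref{it:ore4}/\ref{it:n-ore4} from the instance $m=1$, $n=0$ of \ref{it:lg_ore4} via $xs=(1\cdot x)(s\cdot x^0)\in (Rx)(Rx^0)\subseteq R+Rx$. Nothing further is needed.
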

As it turns out, flipped nonassociative Ore extensions also form a subclass of the class of left GNOEs:
\begin{proposition}
Any flipped nonassociative Ore extension of a nonassociative ring \(R\) is a left GNOE of \(R\).
\end{proposition}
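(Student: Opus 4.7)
The plan is to observe that axioms \ref{it:lg_ore1}, \ref{it:lg_ore2}, and \ref{it:lg_ore3} coincide verbatim with \ref{it:flipped-n-ore1}, \ref{it:flipped-n-ore2}, and \ref{it:flipped-n-ore3}, so only axiom \ref{it:lg_ore4} requires work. For that, I would invoke the isomorphism between an arbitrary flipped nonassociative Ore extension $(S,x)$ and a flipped generalized polynomial ring $R[X;\sigma,\delta]^\fl$ established in \cite[Proposition 2]{AB25}, which reduces the problem to a direct computation inside $R[X;\sigma,\delta]^\fl$.

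Once inside $R[X;\sigma,\delta]^\fl$, the defining multiplication
\[(rX^m)(sX^n)=\sum_{i\in\mathbb{N}}\tau_n(r,\pi_i^m(s))X^{i+n}\]
makes the degree bound transparent: since $\pi_i^m=0$ for $i>m$, the sum truncates, and each surviving summand $\tau_n(r,\pi_i^m(s))X^{i+n}$ has $i+n\in\{n,n+1,\dots,m+n\}$ and hence lies in $R+RX+\cdots+RX^{m+n}$. Biadditive extension yields $(RX^m)(RX^n)\subseteq R+RX+\cdots+RX^{m+n}$, and transporting back along the isomorphism gives \ref{it:lg_ore4}.

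I do not anticipate a genuine obstacle here, since all the content is absorbed by the cited isomorphism together with the explicit formula for $\pi_i^m$. An alternative, purely axiomatic route from \ref{it:flipped-n-ore4} would be to induct on $m$: the base case $m=0$ follows immediately from the second identity in \ref{it:flipped-n-ore4}, and the inductive step from the first identity together with an auxiliary lemma $(ax^k)x=ax^{k+1}$ for $a\in R$. That lemma would itself require a short argument from \ref{it:flipped-n-ore3} and \ref{it:flipped-n-ore4}, so the route through $R[X;\sigma,\delta]^\fl$ remains the cleaner choice.
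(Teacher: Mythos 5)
Your proposal is correct. The paper itself states this proposition without proof (treating it, like the surrounding propositions, as immediate from the definitions), and your argument — reducing via \cite[Proposition 2]{AB25} to the explicit multiplication in \(R[X;\sigma,\delta]^\fl\), where \(\pi_i^m=0\) for \(i>m\) gives the degree bound — is exactly the natural way to fill in that omission; your alternative inductive route from \ref{it:flipped-n-ore4} also works, since the auxiliary identity \((ax^k)x=ax^{k+1}\) falls out of the first identity in \ref{it:flipped-n-ore4} with \(s=1\), \(n=0\), using \(\sigma(1)=1\) and \(\delta(1)=0\) (which follow from \ref{it:flipped-n-ore3}).
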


\subsection{Right generalized nonassociative Ore extensions}

\autoref{def:left-GNOE} is formulated in terms of left \(R\)-modules. A corresponding notion can be defined using right \(R\)-modules in a parallel manner. 
\begin{definition}[Right generalized nonassociative Ore extension]\label{def:right-GNOE} 
Let \(R\) be a nonassociative ring. A pair \((S,x)\) is called a \emph{right generalized nonassociative Ore extension (right GNOE)} of \(R\) if the following axioms hold:  
 \begin{enumerate}[label=(RG\arabic*)]
 \item \(S\) is a ring extension of \(R\) and \(x\in S\); \label{it:rg_ore1}
 \item \(x\) is power-associative;\label{it:rg_ore2}
\item \(S\) is a free right \(R\)-module with basis \(\{1,x,x^2,\dots\}\);\label{it:rg_ore3}
\item \((x^mR)(x^nR)\subseteq R+xR+\cdots+x^{m+n}R\) for all \(m,n\in\mathbb{N}\).\label{it:rg_ore4}
 \end{enumerate}
 \end{definition}
 
Let \((S,x)\) be a right GNOE of a nonassociative ring \(R\). Every nonzero element \(p\in S\) can be uniquely expressed as
\[
    p = x^n r_n + x^{n-1} r_{n-1} + \cdots + x r_1 + r_0,
\]
where \(r_0,\dots,r_n\in R\) with \(r_n\neq 0\). We define the \emph{right degree} of \(p\) as \(\deg_r p \colonequals n\) and the \emph{right leading coefficient} of \(p\) as \(\lc_r(p) \colonequals r_n\). We adopt the conventions \(\deg_r 0 \colonequals -\infty\) and \(\lc_r(0) \colonequals 0\). We leave it to the reader to formulate and prove the analogs of the results from the previous section for right GNOEs.

\begin{definition}[Generalized nonassociative Ore extension]\label{def:GNOE} 
A pair \((S,x)\) is called a \emph{generalized nonassociative Ore extension (GNOE)} of a nonassociative ring \(R\) if \((S,x)\) is both a left and right GNOE of \(R\).
\end{definition}

The following proposition states the precise condition under which the rings \(R[X;\sigma,\delta]\) and \(R[X;\sigma,\delta]^\fl\) are GNOEs of \(R\):

\begin{proposition}\label{prop:gnoe-noe}
    Let \(R\) be a nonassociative ring with an additive surjection \(\sigma\) and an additive map \(\delta\) that satisfy \(\sigma(1) = 1\) and \(\delta(1) = 0\). Then \(R[X;\sigma,\delta]\) (resp., \(R[X;\sigma,\delta]^\fl\)) is a GNOE of \(R\) if and only if \(\sigma\) is a bijection.
\end{proposition}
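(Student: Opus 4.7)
Both $R[X;\sigma,\delta]$ and $R[X;\sigma,\delta]^\fl$ are already left GNOEs of $R$ (as noted earlier in this section). The proposition therefore reduces to determining when each is a right GNOE; since axioms (RG1) and (RG2) coincide with their left counterparts (LG1) and (LG2), the content lies in (RG3) and (RG4).

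For necessity, suppose $\sigma$ is not injective and fix $r \in R$ with $r \neq 0$ and $\sigma(r) = 0$. A direct computation from the defining multiplication of each ring (in the flipped case using $\tau_0 = \cdot$) gives
\[
X \cdot r \;=\; \sigma(r)\, X + \delta(r) \;=\; \delta(r),
\]
so the element $Xr \in S$ admits two distinct right $R$-linear representations in terms of $\{1, X, X^2, \ldots\}$, namely $0 \cdot 1 + r \cdot X$ and $\delta(r) \cdot 1 + 0 \cdot X$. This contradicts (RG3).

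For sufficiency, assume $\sigma$ is bijective. The central identity, valid in both rings,
\[
x^n \cdot s \;=\; \sum_{i=0}^{n} \pi_i^n(s)\, x^i \;=\; \sigma^n(s)\, x^n + (\text{terms of left degree } < n),
\]
follows immediately from each defining multiplication, since the relevant flip factor is $\tau_0(1, \pi_i^n(s)) = \pi_i^n(s)$. I would establish (RG3) by induction on $\deg_l p$: given a nonzero $p \in S$ with $\deg_l p = n$ and $\lc_l(p) = r_n$, set $s_n \colonequals \sigma^{-n}(r_n)$, which is well-defined by bijectivity. Then $p - x^n s_n$ has strictly smaller left degree, so the inductive hypothesis produces a right expansion $p - x^n s_n = \sum_{i=0}^{n-1} x^i s_i$, whence $p = \sum_{i=0}^{n} x^i s_i$. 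Uniqueness follows from the same identity: any right relation $\sum_{i=0}^{n} x^i s_i = 0$ with $s_n \neq 0$ would have left-leading coefficient $\sigma^n(s_n) \neq 0$, a contradiction. As a byproduct, $\deg_r p = \deg_l p$ for every nonzero $p \in S$.

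Axiom (RG4) is then immediate: if $p = x^m a \in x^m R$ and $q = x^n b \in x^n R$, the expansion formula gives $\deg_l p \leq m$ and $\deg_l q \leq n$, so \autoref{prop:degree-condition} yields $\deg_l(pq) \leq m + n$, and the degree identity then places $pq$ in $R + xR + \cdots + x^{m+n} R$. The main obstacle is treating both multiplications uniformly; this is resolved by the observation that the expansion of $x^n \cdot s$ coincides in the two algebras, so a single inductive argument handles both cases.
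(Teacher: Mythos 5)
Your proof is correct and takes essentially the same route as the paper's: injectivity of \(\sigma\) is forced by the uniqueness part of \ref{it:rg_ore3} via \(Xr=\sigma(r)X+\delta(r)\), and conversely the identity \(x^ns=\sigma^n(s)x^n+(\text{lower left degree})\) yields the right basis, the equality \(\deg_l=\deg_r\), and hence \ref{it:rg_ore4}, with the flipped case handled by the same computation. The only difference is that you spell out the induction for \ref{it:rg_ore3} and the verification of \ref{it:rg_ore4} that the paper leaves to the reader.
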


\begin{proof}
    Let \((S,x) \cong R[X;\sigma,\delta]\) be a GNOE. For any \(r\in R\), we have \(\sigma(r)=0\) if and only if \(-\delta(r)+xr=0\) if and only if \(r=0\) because \(1,x\) are left linearly independent over \(R\). This shows that \(\sigma \) is injective. Since \(\{1,x,x^2,\dots\}\) is a basis for \(S\) as a right \(R\)-module, we have \(rx\in \sum_n x^n R\) for every \(r\in R\).  Since \(\sigma\) is injective, \(rx = r_0+xr_1\) for some \(r_0,r_1\in R\). It follows that \(r=\sigma(r_1)\), that is, \(\sigma\) is onto. This completes the proof of the forward direction. 

    Conversely, suppose that \(\sigma \) is a bijection. Since \((S,x)\) is a left GNOE by Proposition~\ref{prop:leftGNOE-NOE}, we only need to show that \((S,x)\) is a right GNOE. Since \(\sigma\) is onto, we have \(S=\sum_n x^nR\). To prove \ref{it:rg_ore3}, it remains to show that \(S\) is a free right \(R\)-module with basis \(\{1,x,x^2,\dots\}\). For any \(r_0+xr_1+\cdots+x^nr_n\in S\), where \(r_n\neq 0\), the left leading coefficient of \(r_0+xr_1+\cdots+x^nr_n\) is \(\sigma^n(r_n)\neq 0\). This shows that \(\{1,x,x^2,\dots\}\) is a basis for \(S\) as a right \(R\)-module. It is easy to verify that \(\deg_l p =\deg_r p\) for any \(p\in S\), from which \ref{it:rg_ore4} follows. This completes the proof of the reverse direction. 

    The proof for flipped nonassociative Ore extensions is similar and is therefore left to the reader. 
\end{proof}

\section{A generalization of Hilbert's Basis Theorem}
Let \((S,x)\) be a left GNOE of a nonassociative ring \(R\), and let \(I=\langle p_1,\ldots,p_n \rangle_S\) for some \(p_1,\ldots,p_n\in S\). Then \(s\in I\) if and only if \(s=\sum_i\sum_j (\cdots((p_is_{ij1})s_{ij2})\cdots) s_{ijm}\) for some \(s_{ijk}\in S\) with \(1\leq i\leq n\) and \(1\leq j,k\leq m\). We denote by $I^\degladd$ the set of \emph{left degree-additive elements of $I$}, that is, those \(s\in I\) that have a representation \(s=\sum_i\sum_j (\cdots((p_is_{ij1})s_{ij2})\cdots) s_{ijm}\), where \(s_{ijk}\in S\), such that \(\deg_l s=\max_i\left(\deg_l p_i+\max_j\sum_k\deg_l s_{ijk}\right)\).

\begin{definition}[Left Euclidean division algorithm]\label{def:left_division} 
A left GNOE \((S,x)\) of a nonassociative ring \(R\) is said to satisfy the \emph{left Euclidean division algorithm} if the following condition holds: 

 \begin{enumerate}[label=(LD)]
 \item For all \(p_1,\dots,p_n\in S\) and nonzero \(q\in S\), if \(\deg_l q \geq \max_i\deg_l p_i\) and \(\lc_l(q)\in \langle \lc_l(p_1),\ldots,\lc_l(p_n)\rangle_R\), then there exists \(s\in \langle p_1,\ldots,p_n\rangle_S^\degladd\) such that \(\deg_l\left(q-s\right) <\deg_l q\).
 \label{it:ldivision}
 \end{enumerate}
 \end{definition}
 
\begin{remark}\label{rem:LEDA-monomials}
To show that a left GNOE satisfies the left Euclidean division algorithm, it is enough to verify Condition \ref{it:ldivision} for the case where \(p_1,\dots,p_n,q\) are monomials. 
\end{remark}

\begin{lemma}\label{lem:left-R-module}
Let $R$ be a nonassocaitive ring and \((S,x)\) be a pair that satisfies \ref{it:lg_ore1}, \ref{it:lg_ore2}, and \ref{it:lg_ore3}. Then the following assertions hold:
\begin{enumerate}[label=\upshape(\roman*)]
\item\ref{it:lg_ore4} with $n=0$ is equivalent to the statement that $\sum_{i=0}^m Rx^i$ is a right $R$-submodule of \(S\) for all $m\in\mathbb{N}$; \label{it:left-R-module1}
\item  if $(S,x)$ is a left GNOE of $R$ that satisfies the left Euclidean division algorithm, then \(\sum_{i=0}^m Rx^i\) is a right \(R\)-submodule of S, and furthermore, we have \(\sum_{i=0}^m Rx^i=\langle x^0,\ldots, x^m \rangle_R\) as right \(R\)-submodules of \(S\) for all $m\in\mathbb{N}$.\label{it:left-R-module2}
\end{enumerate}
\end{lemma}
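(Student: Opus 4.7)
The plan is to prove (i) by directly unpacking the definitions, deduce the submodule assertion of (ii) from (i), and then establish the equality $\sum_{i=0}^m Rx^i = \langle x^0,\ldots,x^m\rangle_R$ by combining \ref{it:ldivision} with strong induction on the left degree. For (i), the forward implication follows by distributing right multiplication: if $p = \sum_{i=0}^m r_i x^i$ and $r \in R$, then $pr = \sum_i (r_ix^i)r$, and each summand lies in $(Rx^i)R \subseteq \sum_{j=0}^i Rx^j$ by \ref{it:lg_ore4} with $n=0$. The converse is equally direct: for any $r,r' \in R$ and any $m$, we have $rx^m \in Rx^m \subseteq \sum_{i=0}^m Rx^i$, and right-$R$-closure of $\sum_{i=0}^m Rx^i$ then yields $(rx^m)r' \in \sum_{i=0}^m Rx^i$, which is \ref{it:lg_ore4} with $n=0$. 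The first assertion of (ii) is immediate from (i), and the inclusion $\langle x^0,\ldots,x^m\rangle_R \subseteq \sum_{i=0}^m Rx^i$ is automatic, since the latter is a right $R$-submodule containing each $x^i = 1\cdot x^i$.

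For the reverse inclusion I would show by strong induction on $\deg_l q$ that every $q \in \sum_{i=0}^m Rx^i$ lies in $\langle x^0,\ldots,x^m\rangle_R$. The case $q=0$ is vacuous. Otherwise, set $d := \deg_l q \le m$ and apply \ref{it:ldivision} with the single element $p_1 := x^d$ --- the hypotheses hold since $\deg_l q = \deg_l p_1$ and $\lc_l(q) \in R = \langle 1\rangle_R = \langle \lc_l(p_1)\rangle_R$, using that $R$ is unital --- to produce an element $s \in \langle x^d\rangle_S^\degladd$ with $\deg_l(q - s) < d$. Comparing leading terms forces $\deg_l s = d$, so the degree-additive equality applied to a defining representation $s = \sum_j (\cdots((x^d s_{j,1}) s_{j,2})\cdots) s_{j,k_j}$ with $s_{j,l} \in S$ yields $\max_j \sum_l \deg_l s_{j,l} = 0$. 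Since the left degree of any nonzero element of $S$ is nonnegative, every summand in which some $s_{j,l}$ vanishes is itself zero and may be discarded, while every remaining summand must satisfy $\deg_l s_{j,l} = 0$ for each $l$, so $s_{j,l} \in R$. Hence $s$ is built from $x^d$ by iterated right multiplications by elements of $R$, so $s \in \langle x^d\rangle_R \subseteq \langle x^0,\ldots,x^m\rangle_R$; and $q - s \in \sum_{j=0}^m Rx^j$ has strictly smaller left degree, so the induction hypothesis gives $q - s \in \langle x^0,\ldots,x^m\rangle_R$, whence $q = s + (q - s) \in \langle x^0,\ldots,x^m\rangle_R$.

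The main obstacle I anticipate is precisely the passage from $\langle x^d\rangle_S^\degladd$ to $\langle x^d\rangle_R$: a priori the multipliers $s_{j,l}$ in a defining representation of $s$ range over all of $S$, and it is only the degree-additive constraint that forces them to lie in $R$. Extracting this cleanly --- in particular accounting for zero summands via the $\deg_l 0 = -\infty$ convention without breaking the maximum equality --- is the one delicate point; the rest of the argument is formal.
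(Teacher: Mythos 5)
Your proposal is correct and follows essentially the same route as the paper: part (i) by unwinding \ref{it:lg_ore4} with $n=0$ against right multiplication, and part (ii) by applying \ref{it:ldivision} with the single monomial $x^{\deg_l q}$ and using the degree-additivity constraint to force the multipliers into $R$, hence $s\in\langle x^{\deg_l q}\rangle_R$. The only (harmless) differences are that you induct on $\deg_l q$ rather than on $m$, obtain the easy inclusion from part (i) instead of from \autoref{prop:degree-condition}, and spell out the $\deg_l 0=-\infty$ bookkeeping that the paper leaves implicit.
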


\begin{proof}
\ref{it:left-R-module1}: If \ref{it:lg_ore4} with $n=0$ holds, then \(
\left(\sum_{i=0}^m Rx^i\right)R=\sum_{i=0}^m(Rx^i)R\subseteq \sum_{i=0}^m\sum_{j=0}^i Rx^j = \sum_{i=0}^m Rx^i.
\) It follows that \(\sum_{i=0}^m Rx^i\) is a right \(R\)-submodule of \(S\). 

If \(\sum_{i=0}^m Rx^i\) is a right \(R\)-submodule of \(S\), then 
\((Rx^m)R\subseteq \left(\sum_{i=0}^m Rx^i\right)R\subseteq \sum_{i=0}^m Rx^i,
\) showing that \ref{it:lg_ore4} with $n=0$ holds.\\

\ref{it:left-R-module2}: It is enough to show that \(\sum_{i=0}^m Rx^i=\langle x^0,\ldots, x^m \rangle_R\) as sets. Assume that $(S,x)$ is a left GNOE of $R$ that satisfies the left Euclidean division algorithm. Then by Proposition \ref{prop:degree-condition}, \(\langle x^0,\ldots,x^m\rangle_R\subseteq\sum_{i=0}^mRx^i\). We prove the inclusion \(\sum_{i=0}^m Rx^i\subseteq \langle x^0,\ldots,x^m \rangle_R\) by induction on \(m\).

Base case (\(m=0\)): \(Rx^0=R\subseteq R=\langle x^0\rangle_R\).

Induction step (\(m+1\)): Assume that \(\sum_{i=0}^mRx^i\subseteq \langle x^0,\ldots,x^m \rangle_R\) is true for some \(m\in\mathbb{N}\). Let \(q\in \sum_{i=0}^{m+1}Rx^{i}\). If \(\deg_l q< m+1\), then \(q\in \sum_{i=0}^m Rx^i\subseteq \langle x^0,\ldots,x^m \rangle_R\subseteq \langle x^0,\ldots,x^{m+1} \rangle_R\). If \(\deg_l q=m+1\), then by applying the left Euclidean division algorithm for \(p_1\colonequals x^{m+1}\) and \(q\), there is \(s\in \langle x^{m+1}\rangle_S^\degladd\) such that \(\deg_l(q-s)<m+1\). From the latter, $\deg_l s=m+1$, and so from the former, \(s\in \langle x^{m+1}\rangle_R\). Since \(q-s\in \sum_{i=0}^m Rx^i\subseteq\langle x^0,\ldots,x^m \rangle_R\), we have \(q\in\langle x^0,\ldots,x^{m+1} \rangle_R\). We conclude that \(\sum_{i=0}^{m+1}Rx^{i}\subseteq \langle x^0,\ldots,x^{m+1}\rangle_R\), which completes the proof.
\end{proof}

We now turn to the main result of this section. 
\begin{theorem}\label{thm:hilbert-for-left-GNOE}
Let \((S,x)\) be a left GNOE of a nonassociative ring \(R\) that satisfies the left Euclidean division algorithm. If \(R\) is right Noetherian, then so is \(S\). 
\end{theorem}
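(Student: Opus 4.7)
The plan is to adapt the classical proof of Hilbert's Basis Theorem for (associative) Ore extensions to the present setting. I will assume, for contradiction, that some right ideal $I$ of $S$ is not finitely generated, recursively construct an infinite sequence $p_1,p_2,\dots\in I$ of elements of non-decreasing left degree, and use the right Noetherianity of $R$ together with Condition \ref{it:ldivision} to reach a contradiction. The nonassociativity of $S$ is never directly confronted; it is absorbed into the hypothesis \ref{it:ldivision}, which plays the role that polynomial long division plays in the classical argument.

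Concretely, suppose $I\subseteq S$ is a right ideal that is not finitely generated. Then $\langle p_1,\dots,p_i\rangle_S\subsetneq I$ for every finite choice, so I can recursively select $p_{i+1}\in I\setminus\langle p_1,\dots,p_i\rangle_S$ of minimal left degree (the left degree takes values in $\mathbb{N}$ on nonzero elements, and $0\in\langle p_1,\dots,p_i\rangle_S$, so the minimum exists). Since $I\setminus\langle p_1,\dots,p_{i+1}\rangle_S\subseteq I\setminus\langle p_1,\dots,p_i\rangle_S$, the sequence $d_i\colonequals\deg_l p_i$ is non-decreasing in $i$. Setting $a_i\colonequals\lc_l(p_i)\in R$, I obtain the ascending chain $\langle a_1\rangle_R\subseteq\langle a_1,a_2\rangle_R\subseteq\cdots$ of right $R$-submodules of $R$, which stabilizes by the right Noetherianity of $R$. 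Hence there exists $n\in\mathbb{N}$ with $a_{n+1}\in\langle a_1,\dots,a_n\rangle_R$.

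Since $\deg_l p_{n+1}=d_{n+1}\geq d_i=\deg_l p_i$ for all $i\leq n$, Condition \ref{it:ldivision} applied to $p_1,\dots,p_n$ and $q\colonequals p_{n+1}$ produces $s\in\langle p_1,\dots,p_n\rangle_S^\degladd\subseteq\langle p_1,\dots,p_n\rangle_S$ with $\deg_l(p_{n+1}-s)<\deg_l p_{n+1}$. Because $s\in\langle p_1,\dots,p_n\rangle_S\subseteq I$ and $p_{n+1}\in I$, we have $p_{n+1}-s\in I$. If $p_{n+1}-s\in\langle p_1,\dots,p_n\rangle_S$, then $p_{n+1}\in\langle p_1,\dots,p_n\rangle_S$, contradicting the choice of $p_{n+1}$; otherwise $p_{n+1}-s\in I\setminus\langle p_1,\dots,p_n\rangle_S$ is a nonzero element of strictly smaller left degree than $p_{n+1}$, contradicting the minimality in the choice of $p_{n+1}$. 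Either alternative yields the desired contradiction, so $I$ must be finitely generated.

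I do not foresee a substantive obstacle: the definitions of \emph{left Euclidean division algorithm} and of $\langle\cdot\rangle_S^\degladd$ are engineered so that the only place where the nonassociative structure of $S$ could cause trouble — the fact that leading coefficients of products $ps$ need not equal $\lc_l(p)s$, so that the naive ``leading coefficient right ideal'' does not behave well — is hidden inside Condition \ref{it:ldivision}. Once that condition is invoked at the critical step, the classical dichotomy (either $p_{n+1}$ already lies in the previous right ideal, or we obtain an element of strictly smaller degree outside it) closes the argument.
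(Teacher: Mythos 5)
Your proof is correct, and it takes a genuinely different route from the paper's. You argue by contradiction, selecting $p_{i+1}\in I\setminus\langle p_1,\dots,p_i\rangle_S$ of minimal left degree; the resulting monotonicity of the degrees together with the stabilization of the chain $\langle a_1\rangle_R\subseteq\langle a_1,a_2\rangle_R\subseteq\cdots$ sets up a single application of \ref{it:ldivision}, whose dichotomy (either $p_{n+1}-s$ lies in $\langle p_1,\dots,p_n\rangle_S$, or it is an element of smaller degree outside it) finishes the argument. The paper instead argues directly: it forms the set $J$ of \emph{all} left leading coefficients of elements of $I$, uses \ref{it:ldivision} twice more to verify that $J$ is a right ideal of $R$, chooses $p_1,\dots,p_n\in I$ realizing a finite generating set of $J$, and supplements them with generators of $I\cap M$, where $M=\sum_{i=0}^{d-1}Rx^i$; that last step relies on \ref{it:left-R-module2} of \autoref{lem:left-R-module} (itself a further application of the division algorithm) to see that $M$ is a Noetherian right $R$-module. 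Your minimal-degree selection absorbs both the low-degree elements of $I$ and the ideal structure of the leading coefficients into the choice of the $p_i$, so you bypass \autoref{lem:left-R-module} and the verification that $J$ is an ideal entirely; what the paper's version buys in exchange is an explicit finite generating set $\{p_1,\dots,p_n,q_1,\dots,q_{n'}\}$ for $I$, whereas yours is a pure existence argument. Both proofs invoke \ref{it:ldivision} at the same critical juncture and check its hypotheses in the same way, so the essential mechanism is shared.
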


\begin{proof}
Assume that \(R\) is right Noetherian. We wish to show that any nonzero right ideal \(I\) of \(S\) is finitely generated. Let \(J\) consist of all left leading coefficients of polynomials in \(I\). We claim that \(J\) is a right ideal of \(R\). If \(r_1,r_2\in J\), then there are \(p_1,p_2\in I\) such that \(\lc_l(p_1) = r_1\) and \(\lc_l(p_2)=r_2\). Applying the left Euclidean division algorithm for \(p_1\), \(p_2\), and \(q \colonequals (r_1+r_2)x^m\), where \(m = \max(\deg_l p_1,\deg_l p_2)\), gives \(s\in \langle p_1,p_2\rangle_S^\degladd\) such that \(\deg_l(q-s)<m\). Since \(\deg_l(q-s)<\deg_l q\), we have \(\lc_l(s) = \lc_l(q) = r_1+r_2\). Since \(s\in \langle p_1,p_2\rangle_S^\degladd\subseteq \langle p_1,p_2\rangle_S\subseteq I\), we conclude that \(r_1+r_2\in J\), and \(J\) is therefore an additive subgroup of $R$. Now, let \(r_1\in J\) and \(r_2\in R\) be arbitrary. We have \(\lc_l(p_1) = r_1\) for some \(p_1\in I\). Applying the left Euclidean division algorithm for \(p_1\) and \(q \colonequals (r_1r_2)x^m\), where \(m = \deg_l p_1\), gives \(s\in \langle p_1\rangle_S^\degladd\) such that \(\deg_l(q-s)<m\). It follows from \(\deg_l(q-s)<\deg_l q\) that \(\lc_l(s) = r_1r_2\). Since \(s\in \langle p_1\rangle_S^\degladd\subseteq \langle p_1\rangle_S  \subseteq I\), we conclude that \(r_1r_2\in J\). Therefore, \(J\) is a right ideal of $R$.

Since \(R\) is right Noetherian and \(J\) is a right ideal of \(R\), \(J\) is generated by finitely many nonzero elements \(r_1,\dots,r_n\in R\). We choose \(p_1,\ldots,p_n\in I\) such that \(\lc_l(p_i)=r_i\) for each \(i\). Let \(d = \max_i\deg_l p_i \) and set  \(M\colonequals\sum_{i=0}^{d-1}Rx^i\). By \ref{it:left-R-module2} in \autoref{lem:left-R-module}, \(M\) is finitely generated as a right \(R\)-module, and since \(R\) is right Noetherian, \(M\) is Noetherian. In particular, \(I\cap M\) is finitely generated as a right \(R\)-module. Let \(I\cap M=\langle q_1,\ldots,q_{n'}\rangle_R\) and set \(I'\colonequals \langle p_1, \ldots, p_n, q_1,\ldots,q_{n'}\rangle_S\). Then \(I'\subseteq I\). We claim that \(I\subseteq I'\). In order to show this, pick an arbitrary element \(q\in I\). We prove that \(q\in I'\) by induction on the degree of \(q\).

Base case (\textsf{P}(\(d\))): If \(\deg_l q< d\), then \(q\in I\cap M\). On the other hand, the generating set of \(I\cap M\) is a subset of the generating set of \(I'\), so \(I\cap M\subseteq I'\), and therefore \(q\in I'\).

Induction step (\(\forall m\geq d\) (\textsf{P}(\(m\))\(\to\) \textsf{P}(\(m+1\)))): Assume that \(\deg_l q=m \geq d\) and that \(I'\) contains all elements of \(I\) with \(\deg_l <m\).  Using the assumption that \(r_1,\ldots,r_n\) generate \(J\) as a right \(R\)-module, we see that \(\lc_l(q)\in \langle \lc_l(p_1),\ldots,\lc_l(p_n)\rangle_R\). Therefore, we can apply the left Euclidean division algorithm for \(p_1,\dots,p_n\), and \(q\) to obtain \(s\in\langle p_1,\ldots,p_n\rangle_S^\degladd\subseteq \langle p_1,\ldots,p_n\rangle_S\subseteq I'\subseteq I\) such that \(\deg_l(q-s) <\deg_l q\). Since \(q,s\in I\), we have \(q-s\in I\). Since \(\deg_l(q-s)<m\), by the induction hypothesis, \(q-s\in I'\). It follows that \(q=(q-s)+s\in I'\), proving \(I=I'\). Since \(I'\) is finitely generated, we conclude that \(I\) is finitely generated.
\end{proof}

Under mild assumptions, the left GNOEs \(R[X;\sigma,\delta]\) and \(R[X;\sigma,\delta]^\fl\) of \(R\) admit a left Euclidean division algorithm. The precise statement is as follows:

\begin{proposition}\label{prop:nonORe-left-division}
Let \(R\) be a nonassociative ring with an additive surjection \(\sigma\) and an additive map \(\delta\) that satisfy \(\sigma(1) = 1\) and \(\delta(1) = 0\). Then \(R[X;\sigma,\delta]\) and \(R[X;\sigma,\delta]^\fl\) satisfy the left Euclidean division algorithm.
\end{proposition}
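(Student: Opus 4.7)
My plan is to apply \autoref{rem:LEDA-monomials} and verify Condition \ref{it:ldivision} only for monomial data. Write $p_i = a_i X^{m_i}$ and $q = bX^m$, with $m \geq \max_i m_i$ and $b \in \langle a_1,\ldots,a_n\rangle_R$, so $b = \sum_i a_i r_i$ for some $r_i \in R$. Since $\sigma$ is an additive surjection, each power $\sigma^{m_i}$ is also surjective, and I pick $c_i \in R$ with $\sigma^{m_i}(c_i) = r_i$. The candidate for the division is
\[
s \colonequals \sum_i (p_i c_i) X^{m - m_i},
\]
for which I claim $s \in \langle p_1,\ldots,p_n\rangle_S^\degladd$ and $\deg_l(q - s) < m$.

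The verification rests on two elementary calculations that apply uniformly to both $R[X;\sigma,\delta]$ and $R[X;\sigma,\delta]^\fl$. First, since $\tau_0$ is ordinary multiplication, the product $p_i c_i = (a_i X^{m_i})(c_i X^0)$ equals $\sum_k a_i \pi_k^{m_i}(c_i) X^k$, whose leading term is $a_i \sigma^{m_i}(c_i) X^{m_i} = a_i r_i X^{m_i}$. Second, the assumptions $\sigma(1) = 1$ and $\delta(1) = 0$ together with additivity of both maps force $\pi_k^j(1) = 1$ for $k = j$ and $0$ otherwise; combined with the identity $\tau_n(d, 1) = d$ valid for every $n \in \mathbb{N}$ and $d \in R$, this shows that right multiplication by $X^{m - m_i}$ acts as a pure degree shift, $(dX^j) X^{m - m_i} = d X^{j + m - m_i}$. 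Applying this termwise to $p_i c_i$ gives $(p_i c_i) X^{m - m_i}$ of degree $m$ with leading coefficient $a_i r_i$, and summing over $i$ yields $\lc_l(s) = b$ and $\deg_l s = m$, hence $\deg_l(q - s) < m$. Degree-additivity is immediate from $\deg_l p_i + \deg_l c_i + \deg_l X^{m - m_i} = m_i + 0 + (m - m_i) = m = \deg_l s$ for each summand.

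The main subtlety, which dictates the left-bracketed form $(p_i c_i) X^{m - m_i}$ rather than the single product $p_i \cdot (c_i X^{m - m_i})$, is the flipped multiplication: evaluating $(a_i X^{m_i})(c_i X^{m - m_i})$ directly in $R[X;\sigma,\delta]^\fl$ yields leading coefficient $\tau_{m - m_i}(a_i, \sigma^{m_i}(c_i))$, which flips to $\sigma^{m_i}(c_i) a_i$ whenever $m - m_i$ is odd, and that coefficient cannot in general be prescribed by any choice of $c_i$. By first forming $p_i c_i$, the inner multiplication is routed through $\tau_0$ (ordinary multiplication), after which the outer right multiplication by $X^{m - m_i}$ only ever pairs with the unit $1$, where $\tau$ is trivial. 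For $R[X;\sigma,\delta]$ the same construction works with all $\tau$'s replaced by ordinary products, and the computation is shorter.
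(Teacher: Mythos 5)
Your overall strategy is the paper's: reduce to monomials via \autoref{rem:LEDA-monomials}, pull the coefficients back through $\sigma^{m_i}$ using surjectivity, and build $s$ by first multiplying $p_i$ on the right by degree-zero elements and only then shifting by $X^{m-m_i}$, so that the flip $\tau$ only ever meets the unit. Your two computations (the leading coefficient of $p_i c_i$ is $a_i\sigma^{m_i}(c_i)$, and right multiplication by $X^{m-m_i}$ is a pure degree shift because $\pi_k^j(1)=\delta_{kj}$ and $\tau_n(d,1)=d$) are correct, and your remark explaining why the bracketing $(p_ic_i)X^{m-m_i}$ is forced in the flipped case is exactly the right observation.

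There is, however, one genuine gap: you take $b\in\langle a_1,\ldots,a_n\rangle_R$ to mean $b=\sum_i a_i r_i$. In a nonassociative ring this is not the general form of an element of the right submodule generated by $a_1,\ldots,a_n$: the set $\{\sum_i a_i r_i\}$ need not be closed under right multiplication, since $(a_ir)s$ is generally not of the form $a_it$. As the paper spells out before \autoref{def:left_division}, the correct description involves iterated products, $b=\sum_i\sum_j(\cdots((a_ir_{ij1})r_{ij2})\cdots)r_{ijm}$, and the candidate must accordingly be
\[
s=\sum_i\sum_j\bigl((\cdots((p_is_{ij1})s_{ij2})\cdots)s_{ijm}\bigr)X^{m-m_i},\qquad \sigma^{m_i}(s_{ijk})=r_{ijk}.
\]
Your two lemmas do extend to cover this: iterating the first one shows that $(\cdots((p_is_{ij1})s_{ij2})\cdots)s_{ijm}$ has degree $m_i$ with leading coefficient $(\cdots((a_ir_{ij1})r_{ij2})\cdots)r_{ijm}$, after which your degree-shift argument applies verbatim. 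So the repair is mechanical, but as written your proof only establishes the division property for the special elements $b=\sum_i a_ir_i$ of the ideal, which is not enough in the nonassociative setting.
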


\begin{proof}
    Let \(S\) denote \(R[X;\sigma,\delta]\) or \(R[X;\sigma,\delta]^\fl\). In light of \autoref{rem:LEDA-monomials}, we need to show that for all nonzero \(p_1=a_1X^{d_1},\dots,p_n = a_nX^{d_n}\in S\) and nonzero \(q= bX^d\in S\), if \(d \geq \max_i d_i\) and \(b\in \langle a_1,\ldots,a_n\rangle_R\), then there exists \(s\in \langle p_1,\ldots,p_n\rangle_S^\degladd\) such that \(\deg_l\left(q-s\right) <\deg_l q\). We have \(b=\sum_i\sum_j (\cdots((a_ir_{ij1})r_{ij2})\cdots) r_{ijm}\) for some \(r_{ijk}\in R\) with \(1\leq i\leq n\) and \(1\leq j,k\leq m\). Since \(\sigma\) is surjective, we can find \(s_{ijk}\in R\) such that \(\sigma^{d_i}(s_{ijk}) = r_{ijk}\). The reader can verify that the polynomial
    \[
        s=\sum_i\sum_j \left((\cdots((p_is_{ij1})s_{ij2})\cdots) s_{ijm}\right)X^{d-d_i}\in \langle p_1,\ldots,p_n\rangle_S^\degladd
    \]
    satisfies the desired condition in both \(R[X;\sigma,\delta]\) and \(R[X;\sigma,\delta]^\fl\). 
\end{proof}
An application of \autoref{thm:hilbert-for-left-GNOE} yields the following result (an alternative proof for \(R[X;\sigma,\delta]\) can be found in the proof of \cite[Theorem 3.7]{BR24}):

\begin{corollary}\label{cor:right-noetherian}
Let \(R\) be a nonassociative ring with an additive surjection \(\sigma\) and an additive map \(\delta\) that satisfy \(\sigma(1) = 1\) and \(\delta(1) = 0\). If \(R\) is right Noetherian, then so are \(R[X;\sigma,\delta]\) and \(R[X;\sigma,\delta]^\fl\).
\end{corollary}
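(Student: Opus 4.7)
The plan is to deduce Corollary~\ref{cor:right-noetherian} as an immediate consequence of Theorem~\ref{thm:hilbert-for-left-GNOE}, applied separately to each of the two rings. Under the hypotheses $\sigma(1)=1$ and $\delta(1)=0$, both $R[X;\sigma,\delta]$ and $R[X;\sigma,\delta]^\fl$ have already been identified in Section~2 as a nonassociative Ore extension of $R$, respectively a flipped nonassociative Ore extension of $R$; by the discussion around Proposition~\ref{prop:leftGNOE-NOE} and the proposition that follows it, both of these classes sit inside the class of left GNOEs. Hence axioms \ref{it:lg_ore1}--\ref{it:lg_ore4} of Definition~\ref{def:left-GNOE} are automatically in place with the distinguished element $x \colonequals X$.

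It then remains to verify the remaining hypotheses of Theorem~\ref{thm:hilbert-for-left-GNOE} for each of the two rings: right Noetherianity of the base ring, which is the standing assumption on $R$, and the existence of a left Euclidean division algorithm. The latter is precisely the content of the preceding Proposition~\ref{prop:nonORe-left-division}, whose hypotheses coincide with those of the corollary (in particular, the surjectivity of $\sigma$ is what allows the preimages $s_{ijk}$ used there to be chosen). Applying Theorem~\ref{thm:hilbert-for-left-GNOE} to $(S,X) = R[X;\sigma,\delta]$ and to $(S,X) = R[X;\sigma,\delta]^\fl$ in turn then yields right Noetherianity of both rings.

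I expect no real obstacle here, as all substantive work has been absorbed into Theorem~\ref{thm:hilbert-for-left-GNOE} and Proposition~\ref{prop:nonORe-left-division}; the corollary is essentially a routine concatenation of these two results, and the only point deserving mention in the write-up is the observation that surjectivity of $\sigma$ (rather than bijectivity, which was needed in Proposition~\ref{prop:gnoe-noe} to ensure the right GNOE structure) already suffices for the implication ``right Noetherian base $\Rightarrow$ right Noetherian extension'' via the \emph{left} GNOE route.
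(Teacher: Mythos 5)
Your proposal is correct and follows exactly the route the paper intends: both rings are left GNOEs satisfying the left Euclidean division algorithm by Proposition~\ref{prop:nonORe-left-division}, so Theorem~\ref{thm:hilbert-for-left-GNOE} applies directly. Your closing remark that surjectivity (not bijectivity) of \(\sigma\) suffices here is a correct and worthwhile observation, consistent with the paper's hypotheses.
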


\begin{remark}If \( R \) is right Noetherian but \( \sigma \) is not surjective, then \( R[X; \sigma, \delta] \) and \( R[X; \sigma, \delta]^\fl \) need not be right Noetherian. For example, let \( K\) be a field, \( R = K[Y] \), and \( \sigma \) the \( K \)-algebra endomorphism of \( R \) given by \( \sigma(p(Y)) = p(Y^2) \). Then \( R \) is Noetherian, but \( R[X; \sigma, 0_R] = R[X; \sigma, 0_R]^\fl \) is neither left nor right Noetherian (see \cite[Exercise 2P(a)]{GW04}).
\end{remark}

\subsection{Hilbert's Basis Theorem for GNOEs}
It is straightforward to formulate the Euclidean division algorithm and Hilbert's Basis Theorem for right GNOEs. For the reader's convenience, we provide a brief description. Let \((S,x)\) be a right GNOE of a nonassociative ring \(R\), and suppose that \(I={}_S\langle p_1,\ldots,p_n \rangle\) for some \(p_1,\ldots,p_n\in S\). Then \(s\in I\) if and only if \(s=\sum_i\sum_j s_{ijm}(\cdots(s_{ij2}(s_{ij1}p_i))\cdots)\) for some \(s_{ijk}\in S\) with \(1\leq i\leq n\) and \(1\leq j,k\leq m\). We denote by $I^\degradd$ the set of \emph{right degree-additive elements of $I$}, that is, those \(s\in I\) that have a representation \(s=\sum_i\sum_j s_{ijm}(\cdots(s_{ij2}(s_{ij1}p_i))\cdots) \), where \(s_{ijk}\in S\), such that \(\deg_r s=\max_i\left(\deg_r p_i+\max_j\sum_k\deg_r s_{ijk}\right)\).

\begin{definition}[Right Euclidean division algorithm]\label{def:right_division} 
A right GNOE \((S,x)\) of a nonassociative ring \(R\) is said to satisfy the \emph{right Euclidean division algorithm} if the following condition holds: 
 \begin{enumerate}[label=(RD)]
 \item For all \(p_1,\dots,p_n\in S\) and nonzero \(q\in S\), if \(\deg_r q \geq \max_i\deg_r p_i\) and \(\lc_r(q)\in {}_R\langle \lc_r(p_1),\ldots,\lc_r(p_n)\rangle\), then there exists \(s\in {}_S\langle p_1,\ldots,p_n\rangle^\degradd\) such that \(\deg_r\left(q-s\right) <\deg_r q\).
 \label{it:rdivision}
 \end{enumerate}
 \end{definition}
 
By symmetry, we obtain the following result:
\begin{theorem}\label{thm:hilbert-for-right-GNOE}
Let \((S,x)\) be a right GNOE of a nonassociative ring \(R\) that satisfies the right Euclidean division algorithm. If \(R\) is left Noetherian, then so is \(S\). 
\end{theorem}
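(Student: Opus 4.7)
The plan is to mirror the proof of \autoref{thm:hilbert-for-left-GNOE} with the roles of left and right systematically interchanged. Before starting, I would first state and prove the right-handed analog of \autoref{lem:left-R-module}: under \ref{it:rg_ore1}--\ref{it:rg_ore3}, the axiom \ref{it:rg_ore4} with $m = 0$ is equivalent to $\sum_{i=0}^n x^iR$ being a left $R$-submodule of $S$ for every $n\in\mathbb{N}$, and whenever the right Euclidean division algorithm holds, this sum coincides with ${}_R\langle 1, x, \dots, x^n \rangle$. The argument is a verbatim translation of that of \autoref{lem:left-R-module}, using \ref{it:rdivision} applied to $p_1 \colonequals x^{n+1}$ in the induction step.

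Equipped with this, I would take an arbitrary nonzero left ideal $I$ of $S$ and let $J$ be the set of right leading coefficients of elements of $I$. To see that $J$ is a left ideal of $R$: given $r_1, r_2 \in J$ realized by $p_1, p_2 \in I$ and $m \colonequals \max(\deg_r p_1, \deg_r p_2)$, apply \ref{it:rdivision} to $p_1, p_2$ and $q \colonequals x^m(r_1 + r_2)$ to produce $s \in {}_S\langle p_1, p_2 \rangle^\degradd \subseteq I$ with $\deg_r(q - s) < m$, forcing $\lc_r(s) = r_1 + r_2 \in J$. The same idea with $q \colonequals x^{\deg_r p_1}(r r_1)$ for $r \in R$ and $r_1 \in J$ handles closure under left multiplication.

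By left Noetherianity of $R$, write $J = {}_R\langle r_1, \dots, r_n \rangle$, choose $p_1, \dots, p_n \in I$ with $\lc_r(p_i) = r_i$, set $d \colonequals \max_i \deg_r p_i$, and let $M \colonequals \sum_{i=0}^{d-1} x^iR$. By the auxiliary lemma, $M$ is finitely generated as a left $R$-module, hence Noetherian, so $I \cap M = {}_R\langle q_1, \dots, q_{n'} \rangle$ for finitely many $q_j$. I claim that $I = I' \colonequals {}_S\langle p_1, \dots, p_n, q_1, \dots, q_{n'} \rangle$, the inclusion $I' \subseteq I$ being evident. For the converse, I would induct on $\deg_r q$ for $q \in I$: the base case $\deg_r q < d$ gives $q \in I \cap M \subseteq I'$, while in the inductive step, since $\lc_r(q) \in J = {}_R\langle \lc_r(p_1), \dots, \lc_r(p_n) \rangle$, the right Euclidean division algorithm produces $s \in {}_S\langle p_1, \dots, p_n \rangle^\degradd \subseteq I'$ with $\deg_r(q - s) < \deg_r q$, so the induction hypothesis places $q - s$, and hence $q$, in $I'$.

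No step presents serious difficulty, but the one point that deserves care is the following: the right Euclidean division algorithm is formulated in terms of left-nested products $s_{ijm}(\cdots(s_{ij1}p_i)\cdots)$, so one must verify that the element $s$ produced in each application indeed witnesses closure of $J$ under \emph{left} multiplication and lies inside the \emph{left} ideal $I$ (or $I'$), rather than on the wrong side. Once this bookkeeping is correctly set up, the rest is a mirror-image translation of the left case.
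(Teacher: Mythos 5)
Your proposal is correct and is precisely the argument the paper intends: the paper proves \autoref{thm:hilbert-for-right-GNOE} simply ``by symmetry'' from \autoref{thm:hilbert-for-left-GNOE}, and your write-up carries out that left--right mirror translation explicitly, including the correct right-handed analog of \autoref{lem:left-R-module} (with \(m=0\) in \ref{it:rg_ore4}) and the correct placement of the nested products in \(I^\degradd\).
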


 \begin{definition}[Euclidean division algorithm]\label{def:division} 
A GNOE \((S,x)\) of a nonassociative ring \(R\) is said to satisfy the \emph{Euclidean division algorithm} if it satisfies both the left and right Euclidean division algorithms. 
 \end{definition}
 
The following theorem follows immediately from \autoref{thm:hilbert-for-left-GNOE} and \ref{thm:hilbert-for-right-GNOE}:

\begin{theorem}\label{thm:hilbert-for-GNOE}
Let \((S,x)\) be a GNOE of a nonassociative ring \(R\) that satisfies the Euclidean division algorithm. If \(R\) is Noetherian, then so is \(S\). 
\end{theorem}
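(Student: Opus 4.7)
The plan is to observe that each of the three hypotheses in the theorem splits cleanly into a ``left'' and a ``right'' part, each part being handled by one of the preceding theorems, and then to combine the two resulting conclusions.

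More precisely, by \autoref{def:GNOE}, the assumption that $(S,x)$ is a GNOE of $R$ means that $(S,x)$ is simultaneously a left GNOE and a right GNOE of $R$; by \autoref{def:division}, the assumption that $(S,x)$ satisfies the Euclidean division algorithm is exactly the conjunction of the left and right Euclidean division algorithms; and by the definition of Noetherianity in the preliminaries, $R$ being Noetherian means $R$ is both left and right Noetherian.

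With these unpackings in hand, I would first apply \autoref{thm:hilbert-for-left-GNOE} to $(S,x)$, viewed as a left GNOE satisfying the left Euclidean division algorithm over the right Noetherian ring $R$; the conclusion is that $S$ is right Noetherian. Next I would apply \autoref{thm:hilbert-for-right-GNOE} to $(S,x)$, viewed as a right GNOE satisfying the right Euclidean division algorithm over the left Noetherian ring $R$; the conclusion is that $S$ is left Noetherian. Combining the two yields that $S$ is Noetherian, as required.

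I do not anticipate any genuine obstacle here: the entire content of the theorem is bookkeeping, since GNOE, Euclidean division algorithm, and Noetherianity have all been defined as the conjunction of their left and right counterparts. All of the substantive work was already carried out in the proof of \autoref{thm:hilbert-for-left-GNOE} and, by the symmetric argument alluded to before \autoref{thm:hilbert-for-right-GNOE}, in that of \autoref{thm:hilbert-for-right-GNOE}, so no further argument is needed beyond invoking these two theorems.
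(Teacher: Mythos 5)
Your proposal is correct and is exactly the paper's argument: the theorem is stated there as following immediately from \autoref{thm:hilbert-for-left-GNOE} and \autoref{thm:hilbert-for-right-GNOE}, with the left/right hypotheses split just as you describe. No gap.
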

The following result provides sufficient conditions for  the GNOE \(R[X;\sigma,\delta]\) of \(R\) to admit the Euclidean division algorithm.
\begin{proposition}
Let \(R\) be a nonassociative ring with an automorphism \(\sigma\) and an additive map \(\delta\) that satisfies \(\delta(1)=0\). Then \(R[X;\sigma,\delta]\) satisfies the Euclidean division algorithm.
\end{proposition}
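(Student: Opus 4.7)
The plan is to reduce to the already-available left-hand result and supply a direct construction for the right-hand half. Since $\sigma$ is a bijection, \autoref{prop:gnoe-noe} ensures that $R[X;\sigma,\delta]$ is a GNOE, and since $\sigma$ is in particular surjective, \autoref{prop:nonORe-left-division} already provides the left Euclidean division algorithm, so only the right Euclidean division algorithm remains to be verified.

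For that, I would first invoke the right-hand analog of \autoref{rem:LEDA-monomials} and reduce to the case where $p_i = X^{d_i} a_i$ and $q = X^d b$ are right-form monomials with $d \geq \max_i d_i$ and $b = \sum_{i,j} r_{ijm}\bigl(\cdots(r_{ij1} a_i)\cdots\bigr)$ for some $r_{ijk}\in R$. Mirroring the candidate used in the proof of \autoref{prop:nonORe-left-division}, but twisting each coefficient by $\sigma^{d_i}$ instead of $\sigma^{-d_i}$, I would try
\[
  s = \sum_{i,j} X^{d-d_i}\bigl(\sigma^{d_i}(r_{ijm})\bigl(\cdots\bigl(\sigma^{d_i}(r_{ij1})\, p_i\bigr)\cdots\bigr)\bigr).
\]
This lies in ${}_S\langle p_1,\dots,p_n\rangle^\degradd$ because each summand is produced from $p_i$ by $m$ zero-right-degree left multiplications followed by a left multiplication by $X^{d-d_i}$, yielding total right degree $d_i + (d-d_i) = d$.

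The core task is then to prove $\lc_r(s) = b = \lc_r(q)$, which together with $\deg_r s \leq d$ gives $\deg_r(q-s) < d$. Since $R[X;\sigma,\delta]$ is a GNOE with $\sigma$ bijective, the identity $\lc_l(p) = \sigma^{\deg_l p}(\lc_r(p))$ (already exploited in the proof of \autoref{prop:gnoe-noe}) lets me verify instead the equivalent statement $\lc_l(s) = \sigma^d(b) = \lc_l(q)$. Using $(uX^0)(vX^k) = (uv)X^k$, expanding $X^{d_i}a_i$ in left form, and iterating $m$ times, while exploiting that $\sigma$ is a ring automorphism of $R$ so that $\sigma^{d_i}$ distributes across the nested product in $R$, the left leading coefficient of $\sigma^{d_i}(r_{ijm})\bigl(\cdots(\sigma^{d_i}(r_{ij1}) p_i)\cdots\bigr)$ works out to be $\sigma^{d_i}\bigl(r_{ijm}(\cdots(r_{ij1} a_i)\cdots)\bigr)$. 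A final left multiplication by $X^{d-d_i}$ then contributes an outer $\sigma^{d-d_i}$, producing $\sigma^d\bigl(r_{ijm}(\cdots(r_{ij1} a_i)\cdots)\bigr)$; summing over $(i,j)$ and using additivity of $\sigma$ yields $\lc_l(s) = \sigma^d(b)$.

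The main, and essentially only, obstacle is bookkeeping: confirming that the $\sigma^{d_i}$-twist built into the candidate cancels exactly the $\sigma^{d-d_i}$ unavoidably produced when the leading coefficient crosses the outermost left multiplication by $X^{d-d_i}$. This is precisely the step at which the full ring-automorphism property of $\sigma$ (beyond mere additivity) is used; any weaker hypothesis on $\sigma$ would force a different choice of twist, or break the argument entirely.
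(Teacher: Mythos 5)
Your proof is correct and takes essentially the same route as the paper: the same reduction to right-form monomials and the identical candidate \(s=\sum_{i,j}X^{d-d_i}\bigl(\sigma^{d_i}(r_{ijm})(\cdots(\sigma^{d_i}(r_{ij1})p_i)\cdots)\bigr)\), obtained by twisting the coefficients by \(\sigma^{d_i}\). The only difference is that you spell out the leading-coefficient verification (via \(\lc_l(p)=\sigma^{\deg_l p}(\lc_r(p))\) and the multiplicativity of \(\sigma\)) that the paper leaves to the reader.
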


\begin{proof}
    Let \(S\) denote \(R[X;\sigma,\delta]\). By \autoref{prop:nonORe-left-division}, \(S\) satisfies the left Euclidean divison algorithm. We only need to show that \(S\) satisfies the right Euclidean division algorithm. In light of \autoref{rem:LEDA-monomials}, we need to show that for all nonzero \(p_1=x^{d_1}a_1,\dots,p_n = x^{d_n}a_n\in S\) and nonzero \(q= x^db\in S\), if \(d \geq \max_i d_i\) and \(b\in {}_R\langle a_1,\ldots,a_n\rangle\), then there exists \(s\in {}_S\langle p_1,\ldots,p_n\rangle^\degradd\) such that \(\deg_r\left(q-s\right) <\deg_r q\). We have \(b=\sum_i\sum_j r_{ijm}(\cdots(r_{ij2}(r_{ij1}a_i))\cdots) \) for some \(r_{ijk}\in R\) with \(1\leq i\leq n\) and \(1\leq j,k\leq m\). Since \(\sigma\) is bijective, we can find \(s_{ijk}\in R\) such that \(\sigma^{-d_i}(s_{ijk}) = r_{ijk}\). The reader can verify that the polynomial
    \[
        s=\sum_i\sum_j X^{d-d_i}\left(s_{ijm}(\cdots(s_{ij2}(s_{ij1}p_i))\cdots) \right)\in {}_S\langle p_1,\ldots,p_n\rangle^\degradd
    \]
    satisfies \(\deg_r\left(q-s\right) <\deg_r q\), completing the proof. 
\end{proof}

An application of \autoref{thm:hilbert-for-right-GNOE} yields the following result, which generalizes the left version of \cite[Corollary 3]{BR23}, where \(\delta\) was assumed to be a \(\sigma\)-derivation:

\begin{corollary}\label{cor:left-noetherian}
Let \(R\) be a nonassociative ring with an automorphism \(\sigma\) and an additive map \(\delta\) that satisfies \(\delta(1) = 0\). If \(R\) is left Noetherian, then so is \(R[X;\sigma,\delta]\).
\end{corollary}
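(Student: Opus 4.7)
The statement is a clean corollary, so my plan is to assemble it from results already in place rather than prove anything substantive from scratch. The key observation is that an automorphism $\sigma$ is in particular an additive bijection with $\sigma(1) = 1$, so the hypotheses of both the preceding proposition and of \autoref{prop:gnoe-noe} are satisfied.

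First, I would invoke \autoref{prop:gnoe-noe} to conclude that $R[X;\sigma,\delta]$ is a GNOE of $R$; in particular, it is a right GNOE, which is what \autoref{thm:hilbert-for-right-GNOE} requires as input. Second, I would apply the immediately preceding proposition to conclude that $R[X;\sigma,\delta]$ satisfies the (full) Euclidean division algorithm, and in particular the right Euclidean division algorithm of \autoref{def:right_division}. With these two ingredients in hand, \autoref{thm:hilbert-for-right-GNOE} applies directly: since $R$ is left Noetherian, so is $R[X;\sigma,\delta]$.

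There is no real obstacle here, since all the work has been done in the preceding machinery; the only thing worth remarking on is that the hypothesis that $\sigma$ is an automorphism (rather than merely an additive surjection, as in \autoref{cor:right-noetherian}) is used precisely to get bijectivity of $\sigma$, which powers both the GNOE structure and the construction of the polynomial $s$ in the proof of the preceding proposition via $\sigma^{-d_i}$. The resulting proof should be one or two sentences long, simply chaining the cited results together.
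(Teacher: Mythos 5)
Your proposal is correct and matches the paper's own (implicit) argument: the corollary is stated as an immediate application of \autoref{thm:hilbert-for-right-GNOE}, using the preceding proposition for the right Euclidean division algorithm and \autoref{prop:gnoe-noe} for the right GNOE structure. Your remark about where the bijectivity of \(\sigma\) is actually used is accurate and consistent with the paper's surrounding discussion.
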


\begin{remark}
If \(R\) is left Noetherian and \(\sigma\) is merely an additive bijection that satisfies \(\sigma(1)=1\), then \(R[X;\sigma,\delta]\) need not be left Noetherian (see \cite[Example 3.11]{BR24}).
\end{remark}

The following example shows that \autoref{cor:left-noetherian} does not hold for \(R[X;\sigma,\delta]^\fl\):

\begin{example}
 Let \(R\) be a ring that is left Noetherian but not right Noetherian.  Then there exists an ascending chain of right ideals \(I_1\subsetneq I_2\subsetneq\cdots\) in \(R\). For \(i\geq 1\), define \(J_i = I_iX+R[X]^{\fl}X^2\). It is easy to see that each \(J_i\) is a left ideal in \(R[X]^\fl\). Then \(J_1\subsetneq J_2\subsetneq\cdots\)
is an ascending chain of left ideals in \(R[X]^\fl\). Hence, \(R[X]^\fl\) is not left Noetherian.
\end{example}

The preceding example illustrates that \(R[X]^\fl\) need not be left Noetherian even when \(R\) is left Noetherian; the next proposition provides a sufficient condition which ensures that \(R[X;\sigma,\delta]^\fl\) inherits the Noetherian property from \(R\).

\begin{proposition}\label{prop:flipped-Hilbert}
    Let \(R\) be a nonassociative ring with additive maps \(\sigma\) and \(\delta\) that satisfy \(\sigma(1) = 1\) and \(\delta(1) = 0\). Assume that \(\sigma^2\) is an automorphism and \(\sigma\circ\delta + \delta\circ\sigma=0\). If \(R\) is Noetherian, then so is \(R[X;\sigma,\delta]^\fl\).
\end{proposition}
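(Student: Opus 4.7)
The plan is to handle right and left Noetherianity of $S \colonequals R[X;\sigma,\delta]^\fl$ separately. For the right side, observe that since $\sigma^2$ is a bijection and $\sigma$ is additive, $\sigma$ itself is an additive bijection of $R$: injectivity follows from $\sigma(r) = 0 \Rightarrow \sigma^2(r) = 0$, and surjectivity from $r = \sigma^2(r') = \sigma(\sigma(r'))$. In particular, $\sigma$ is an additive surjection with $\sigma(1)=1$, so \autoref{cor:right-noetherian} applied to $S$ immediately gives that $S$ is right Noetherian whenever $R$ is.

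The rest of the argument targets left Noetherianity and rests on exhibiting a subring of $S$ to which \autoref{cor:left-noetherian} applies. Let $S_0 \colonequals \bigoplus_{k \geq 0} R X^{2k}$ denote the even-degree part. The key algebraic input is the identity $(t\sigma + s\delta)^{2m} = (t^2\sigma^2 + s^2\delta^2)^m$, which follows from $\sigma\circ\delta + \delta\circ\sigma = 0$ together with the resulting commutativity of $\sigma^2$ and $\delta^2$. Reading off coefficients yields $\pi_i^{2m} = 0$ for odd $i$ and $\pi_{2k}^{2m} = \binom{m}{k}\sigma^{2k}\delta^{2(m-k)}$. Combined with the fact that $\tau_{2b}$ is the ordinary product, this at once forces $S_0$ to be closed under multiplication in $S$ and shows that the resulting product matches that of the ordinary generalized polynomial ring $R[Y;\sigma^2,\delta^2]$ under $Y \leftrightarrow X^2$. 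Since $\sigma^2$ is an automorphism and $\delta^2(1) = 0$, \autoref{cor:left-noetherian} applied to $R[Y;\sigma^2,\delta^2]$ shows that $S_0$ is left Noetherian.

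Next, I would show that $S$ is finitely generated as a left $S_0$-module, more precisely $S = {}_{S_0}\langle 1, X\rangle$. Writing $S_1 \colonequals \bigoplus_{k\geq 0} RX^{2k+1}$, the vanishing $\pi_i^{2a} = 0$ for odd $i$ together with the flip $\tau_{2b+1}(r,s) = sr$ gives $(rX^{2a})(sX^{2b+1}) \in S_1$, so $S_1$ is a left $S_0$-submodule of $S$. Since $\pi_i^{2a}(1) = 0$ for $i \neq 2a$ and $\pi_{2a}^{2a}(1) = 1$, one also gets $(rX^{2a})\cdot X = rX^{2a+1}$, whence $S_1 = {}_{S_0}\langle X\rangle$; combined with the trivial $S_0 = {}_{S_0}\langle 1\rangle$, this yields $S = {}_{S_0}\langle 1, X\rangle$.

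The proof concludes with a purely module-theoretic step. The map $S_0 \oplus S_0 \to S$ defined by $(a,b) \mapsto a + b\cdot X$ is a surjective morphism of left $S_0$-modules, and since direct sums and quotients of Noetherian modules are Noetherian — the standard short-exact-sequence argument uses only additivity and thus carries over verbatim to the nonassociative setting — $S$ is Noetherian as a left $S_0$-module. Any left ideal of $S$ is in particular a left $S_0$-submodule, and is therefore finitely generated over $S_0$; it is then \emph{a fortiori} finitely generated as a left $S$-ideal. Together with the right Noetherianity established in the first paragraph, this finishes the proof. The main obstacle is the identification of $S_0$ with $R[Y;\sigma^2,\delta^2]$; once the generating-function computation of $\pi_i^{2m}$ is in place, the remaining steps are routine.
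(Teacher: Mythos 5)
Your first paragraph is fine and is in fact a slight shortcut compared to the paper: since \(\sigma^2\) bijective forces \(\sigma\) to be an additive bijection with \(\sigma(1)=1\), \autoref{cor:right-noetherian} applies directly to \(R[X;\sigma,\delta]^\fl\) and settles the right side. The identification of the even part \(S_0\) with \(R[Y;\sigma^2,\delta^2]\) and the decomposition \(S=S_0\oplus S_1\) into left \(S_0\)-submodules also match the paper. The gap is in your last step: the map \(S_0\oplus S_0\to S\), \((a,b)\mapsto a+b\cdot X\), is \emph{not} a morphism of left \(S_0\)-modules when \(S_0\oplus S_0\) carries the standard left-regular action. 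A morphism would require \(c\cdot(b\cdot X)=(cb)\cdot X\), and this fails precisely because of the flip \(\tau_{2b+1}(r,s)=sr\) that you used one paragraph earlier: already for \(c,s\in R\), \(\sigma=\id\), \(\delta=0\), one has \(c(sX)=\tau_1(c,s)X=scX\), whereas \((cs)X=csX\). In general the action of \(cX^{2a}\) on \(sX^{2b+1}\) produces coefficients \(\pi^{2a}_{2k}(s)\,c\): the twist hits the module element and the ring element multiplies on the \emph{right}, so \(S_1\) is not the left regular \(S_0\)-module, and ``finitely generated over a left Noetherian ring'' does not rescue you, since even the cyclic module \({}_{S_0}\langle X\rangle\) need not be a quotient of \(S_0\) without this associativity. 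The gap is fatal, not cosmetic: in the paper's own Example (\(R\) left but not right Noetherian, \(\sigma=\id\), \(\delta=0\)) every intermediate claim of yours about \(S_0=R[Y]\) holds, yet \(J_i=I_iX+R[X]^\fl X^2\) is a strictly ascending chain of left ideals, hence of left \(S_0\)-submodules of \(S\); so \(S\) cannot be isomorphic to \(S_0\oplus S_0\) with the standard structure. Any correct proof must use the \emph{right} Noetherian hypothesis on \(R\) in the left half of the argument, and yours never does.

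This is exactly where the paper's proof differs: it views \(S=S_0\oplus S_0X\) as a left \(S_0\)-module whose second summand carries the action \(r\cdot r_2=r_2r\), so that its \(S_0\)-submodules are controlled by right multiplication, and the ascending chain condition for them comes from \(S_0=R[Y;\sigma^2,\delta^2]\) being \emph{right} Noetherian (\autoref{cor:right-noetherian}) in addition to left Noetherian (\autoref{cor:left-noetherian}). If you want to keep your setup, the cleanest repair is to note that under \(sX^{2b+1}\mapsto sY^{b}\) and \(cX^{2a}\mapsto cY^{a}\) the left action of \(S_0\) on \(S_1\) becomes the left regular multiplication of \(R^{\mathrm{op}}[Y;\sigma^2,\delta^2]\), whose left ideals satisfy the ascending chain condition by \autoref{cor:left-noetherian} applied to \(R^{\mathrm{op}}\) (left Noetherian because \(R\) is right Noetherian). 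Either way, the claim ``\(S_1\cong S_0\) as left \(S_0\)-modules via \(b\mapsto bX\)'' must be replaced by an argument that feeds the right Noetherianity of \(R\) into the left-hand conclusion.
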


\begin{proof}
    Let \(S \colonequals R[Y;\sigma^2,\delta^2]\). Due to the relation \(\sigma\circ\delta + \delta\circ\sigma=0\), the map 
    \(\sum_{i\in\mathbb{N}}r_iY^{i}\mapsto \sum_{i\in\mathbb{N}}r_iX^{2i} \) identifies \(S\) with a subring of \(R[X;\sigma,\delta]^\fl\) (See \cite[Remark 3]{AB25}). As a left \(S\)-module, \(R[X;\sigma,\delta]^\fl = S\oplus SX\) is isomorphic to the module \(M \colonequals S\oplus S\), where the \(S\)-module multiplication is given by \((r,(r_1,r_2))\mapsto (rr_1,r_2r).\)
    By \autoref{cor:right-noetherian} and \ref{cor:left-noetherian}, \(S\) is Noetherian. It follows that \(M\), considered as a left or a right \(S\)-module, is Noetherian. Since  \(R[X;\sigma,\delta]^\fl\) contians \(S\) as a subring and \(R[X;\sigma,\delta]^\fl\) is Noetherian as a left \(S\)-module and as a right \(S\)-module, we conclude that \(R[X;\sigma,\delta]^\fl\) is Noetherian.
\end{proof}

\begin{example}
Let \(A\) be a nonassociative \(*\)-algebra over an associative, commutative ring \(K\) (see \autoref{sec:flipped}). If \(A\) is Noetherian, then by \autoref{prop:flipped-Hilbert}, \(A[X; *, 0_A]^\fl\) is also Noetherian. This provides an alternative proof that the Cayley double of \(A\) with parameter \(\mu\in K\setminus\{0\}\), \(\Cay(A,\mu)\cong A[X;*,0_A]^\fl/\langle X^2-\mu\rangle\), is Noetherian if \(A\) is Noetherian.
\end{example}

We conclude the article with a question.

\begin{question}
Let \(R\) be a nonassociative ring with an automorphism \(\sigma\) and an additive map \(\delta\) that satisfies \(\delta(1) = 0\). If \(R\) is Noetherian, is it true that \(R[X;\sigma,\delta]^\fl\) is Noetherian?
\end{question}

\end{document}